\newcommand{\Ab}{\mathbf A}
\newcommand{\Fb}{\mathbf F}
\newcommand{\R}{\mathbb R}
\newcommand{\C}{\mathbb C}
\newcommand{\E}{\mathrm{E}_{\rm gs}(\kappa, H)}
\DeclareMathOperator{\curl}{curl}
\newtheorem{thm}{Theorem}[section]
\newtheorem{prop}[thm]{Proposition}
\theoremstyle{remark}
\newtheorem{assump}[thm]{Assumption}
\newtheorem{rem}[thm]{Remark}
\numberwithin{equation}{section}
\title[Ginzburg-Landau density]
{The density of superconductivity in domains with corners}% in the bulk superconductivity regime}
\author{Bernard Helffer}
\author{Ayman Kachmar}
\address[B. Helffer]{Laboratoire Jean Leray, Universit\'e de Nantes, 2 rue de la Houssini\`ere, 44322 Nantes (France) and Laboratoire de Math\'ematiques, Univ. Paris-Sud. }
\email{bernard.helffer@univ-nantes.fr}
\address[A. Kachmar]{Department of Mathematics, Lebanese University, Nabatieh, Lebanon.}
\email{ayman.kashmar@gmail.com}
\date{\today}
\begin{document}

\begin{abstract}
We compute the $L^2$-norm of the minimizer of the Ginzburg-Landau functional in a  planar domain with a finite number of corners.  Our computations are valid  for a uniform applied magnetic field,  large Ginzburg-Landau parameter and in the regime where superconductivity is confined near the corners of the domain.
\end{abstract}

\maketitle %\tableofcontents

\section{Introduction and main results}\label{hc2-sec:int}

\subsection{Superconductivity in domains with corners}
Superconducting samples with non-smooth cross sections are interesting for their particular response to the applied magnetic field. When a superconducting sample is submitted to a uniform applied magnetic field of very large intensity, superconductivity breaks down and the  sample returns to the normal conducting state (see \cite{GP}). When the intensity of the magnetic field is decreased just below a certain threshold value $H_{C_3}$, superconductivity appears again on the surface of the sample. The value $H_{C_3}$ is called the third critical field and is computed via a linear spectral problem (see \cite{FH}). The existing results (in physics and mathematics) show that $H_{C_3}$ depends on the geometry of the sample's cross section. In particular, 
the value of $H_{C_3}$ is significantly larger when the cross section of the sample has corners than for samples of the same material but with smooth cross section. That has been early observed in the Physics literature \cite{BrDFM}, then established by  rigorous analysis of the Ginzburg-Landau functional.  We refer  to the works of Pan \cite{Pa}, Pan-Kwek \cite{KwPa}, 
Jadallah \cite{Ja}, Bonnaillie \cite{Bon}, Bonnaillie-No\"el--Dauge \cite{BD},  Bonnaillie-No\"el--Fournais  \cite{BF},  and to Chapter 16 in the book \cite{FH} for the state of the art in 2014.

The experimentally observed change in the value of the third critical $H_{C_3}$ is due to the fact that  the first
eigenvalue for the magnetic Neumann Laplacian is asymptotically smaller when a domain has a
corner. Eigenfunctions corresponding to the lowest eigenvalues will be
localized near the corners and their leading order large field asymptotics 
 are controlled by the model of an infinite sector \cite{Bon,BD}.

Recently,  Correggi-Giacomelli \cite{CG}  studied the non-linear aspects of surface superconductivity in domains with corners,   and Exner-Lotoreichik-P\'erez-Obiol \cite{ELP} obtained new estimates on the spectral model in an infinite sector. In this contribution, we implement the recent improvements for the analysis of bulk superconductivity in \cite{HK} in order  to provide a precise description of the confinement of superconductivity near the corners of the domain. Our results sharpen the results by Bonnaillie-No\"el--Fournais \cite{BF} and are complementary to those of Correggi-Giacomelli \cite{CG}. The later article is devoted to the regime where superconductivity is uniformly distributed along the whole surface of the sample.

\subsection{Domains with corners}

Let $\Omega\subset\R^2$ be an open, bounded and simply connected domain. Assume that the boundary $\Gamma$ of the domain $\Omega$ is a curvilinear polygon of class $C^3$ (see \cite[p.~34-42]{G}). By this we mean that for all $x\in\Gamma$, there exists a neighborhood $V_x$ of $x$ in $\R^2$ and an injective mapping $\psi^x=(\psi^x_1,\psi^x_2):V_x\to\R^2$ such that:  
\begin{itemize}
\item $\psi^x$ and $(\psi^x)^{-1}$ are of class $C^3$\,;
\item $\Omega\cap V_x$ is either $\{y\in\Omega~:~\psi^x_2(y)<0\}$, $\{y\in\Omega~:~\psi^x_1(y)<0~\&~\psi^x_2(y)<0\}$ or $\{y\in\Omega~:~\exists~j\in\{1,2\},~ \psi^x_j(y)<0\}$.
\end{itemize}
When $\Omega\cap V_x=\{y\in\Omega~:~\psi^x_2(y)<0\}$, the point $x$ is said to be smooth. Otherwise,  $x$ is said to be a corner point.
We assume that $\Gamma$ consists exactly of $m\geq 2$ connected simple smooth curves $(\Gamma_k)_{k=1}^m$ such that, 
\begin{equation}\label{eq:G**}
\Gamma_k\cap\Gamma_{k'}=\emptyset\quad{\rm for}~(k,k')\in\{(i,j)\not=(m,1)~:~i-j>1\}\,,
\end{equation}
and
\begin{equation}\label{eq:G}
\forall~k\in\{1,\cdots,m\},\quad \Gamma_{k-1}\cap\Gamma_{k}=\{\mathsf s_k\}\,,
%\quad{\rm and }{\clr  ~\mathsf s_k{\rm ~is ~not~smooth}}\,, \mbox{\clr Bizarre: tu veux dire quoi ?}
\end{equation} 
with  the convention $\Gamma_0:=\Gamma_m$.  The number $m$ is assumed to be  the minimal number such that the boundary consists of $m$ smooth curves. The points $\mathsf s_k$, $k\in\{1,\cdots,m\}$, are the vertices (corners) of  the domain $\Omega$. Furthermore, for $1\leq k\leq m$, we assume that the curve $\Gamma_k$ is oriented counter clockwise from $\mathsf s_k$ to $\mathsf s_{k+1}$, and we denote by $\alpha_k$ the (interior) angle at $\mathsf s_k$ between $\Gamma_{k-1}$ and $\Gamma_k$ (see Figure~1). In the sequel, 
\begin{equation}\label{eq:Sigma}
\Sigma=\{\mathsf s_1,\cdots,\mathsf s_m\}\,.
\end{equation}
%{\clr Peux-tu m'envoyer la figure ?}
\begin{figure}
\center
\includegraphics[scale=3.5]{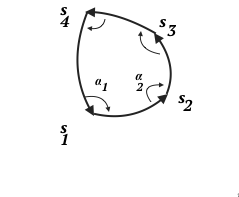}
\caption{The domain $\Omega$ with its boundary $\Gamma$ and the vertices $s_1,s_2,\cdots$.}
\end{figure}
\subsection{The Ginzburg-Landau model}

Assume that $\Omega$ is the horizontal cross section of a superconducting wire submitted to the applied magnetic field $\kappa H \vec{e}$, where $\vec{e}=(0,0,1)$, $\kappa$ is the Ginzburg-Landau parameter and $H>0$ measures the intensity of the magnetic field. The superconducting properties of the sample are described by a configuration $(\psi,\Ab)\in\mathcal H$ minimizing the Ginzburg-Landau energy
\begin{equation}\label{eq-3D-GLf}
\mathcal E_{\rm GL}(\psi,\Ab)=
\int_\Omega \left(|(\nabla-i\kappa H\Ab)\psi|^2-\kappa^2|\psi|^2+\frac{\kappa^2}2|\psi|^4\right)\,dx+(\kappa H)^2\int_{\R^2}|\curl\Ab-1|^2\,dx\,.
\end{equation}
The  space $\mathcal H$ is   defined as follows  (see Appendix D in \cite{FH})
\begin{equation}\label{eq:space}
\mathcal H=\{(\psi,\Ab)~:~\psi\in W^{1,2}(\Omega;\C)~\&~\Ab-\Fb\in  W^{1,2}_{0,0}(\R^2;\R^2)\}
\end{equation}
where 
\begin{equation}\label{eq:space*}
W^{1,2}_{0,0}(\R^2;\R^2)=\{\mathbf u\in W^{1,2}_{\rm loc}(\mathbb R^2;\R^2)~:~\frac{\mathbf u}{\sqrt{1+x^2}\,\ln(2+x^2)}\in L^2(\R^2;\R^2)\,,\nabla \mathbf u\in L^2(\R^2;\R^2))\}
\end{equation}
and
\begin{equation}\label{eq:F}
\Fb(x_1,x_2)=\frac12 (-x_2,x_1)\,.
\end{equation}
The ground state energy of the functional in
\eqref{eq-3D-GLf} is:
\begin{equation}\label{eq-gse}
\E=\inf\{\mathcal E_{\rm GL}(\psi,\Ab)~:~(\psi,\Ab)\in\mathcal H\}\,.
\end{equation}
We consider  $(\psi,\Ab) \in \mathcal H$ a minimizer of the functional $\mathcal E_{\rm GL}$. We   will denote it by $(\psi,\Ab)_{\kappa,H}$ to emphasize its dependence on $\kappa$ and $H$. 

A minimizer $(\psi,\Ab)_{\kappa,H}$  is a solution of the following Ginzburg-Landau equations (we use the notation $\nabla^\bot=(\partial_{x_2},-\partial_{x_1})$)
\begin{equation}\label{eq:GL}
\left\{
\begin{array}{rll}
-\big(\nabla-i\kappa H\Ab\big)^2\psi&=\kappa^2(1-|\psi|^2)\psi &{\rm in}\ \Omega\,,\\
-\nabla^{\perp}  \curl\Ab&= (\kappa H)^{-1}{\rm Im}\big(\overline{\psi}\,(\nabla-i\kappa H {\bf A})\psi\big) & {\rm in}\ \Omega\,,\\
\nu\cdot(\nabla-i\kappa H\Ab)\psi&=0 & {\rm on}\ \partial \Omega \,,\\
\curl \Ab &=B_0 & {\rm in}\ \R^2\setminus\overline{\Omega} \,.
\end{array}
\right.
\end{equation} 
The following quantities 
\begin{equation}
|\psi|^2\,,\quad\curl\Ab\,,\quad |(\nabla-i\kappa H\Ab)\psi|^2\,,\,\mbox{ and } 
j(\psi,\Ab)={\rm Re}\big(-i\overline{\psi}\,(\nabla-i\kappa H\Ab)\psi\big) 
\end{equation}
are invariant under the {\it gauge} transformation $(\psi,\Ab)\mapsto (e^{i\chi},\Ab-\nabla\chi)$, for any $\chi\in H^1(\Omega;\R)$. These are the  physically relevant quantities  which are called  {\it density}, {\it induced magnetic field}, {\it  kinetic energy density} 
%{\clr "energy density" ?} 
and {\it supercurrent} respectively.

\subsection{The reference energy}

The behavior of the minimizers of the functional in \eqref{eq-3D-GLf} has been analyzed by Bonnaillie-No\"el and Fournais  in \cite{BF} (and recently by Correggi-Giacomelli in \cite{CG}). The focus in \cite{BF} was mainly on the asymptotics of the ground state energy in the regime where the minimizing order parameter is concentrated near the corners of the domain (\cite{CG} is devoted to the full surface regime). In that respect, a central role is played by the reference functional defined as follows:
\begin{equation}\label{eq:ref-en}
\forall~u\in W^{1,2}_{\Fb}(\Omega_\alpha)\,, \quad J_{\mu,\alpha}(u)=\int_{\Omega_\alpha}\left(|(\nabla-i\Fb)u|^2-\mu|u|^2+\frac{\mu}2|u|^4\right)\,dx\,,
\end{equation}
where $\mu>0$ and $\alpha\in (0,2\pi)$ are given, 
\begin{equation}\label{eq:Om-alpha}
\Omega_\alpha=\{(x_1,x_2)\in\R^2~:~x_1>0~\&~x_2<x_1 \tan\frac\alpha2\,\}
\end{equation}
is the infinite sector in $\R^2$ of opening $\alpha$, and
\begin{equation}\label{eq:sob-mag}
W^{1,2}_{\Fb}(\Omega_\alpha)=\{u\in L^2(\Omega_\alpha)~:~(\nabla-i\Fb)u\in L^2(\Omega_\alpha)\}\,.
\end{equation}
The functional in \eqref{eq:ref-en} is bounded from below if and only if $\mu\leq \Theta_0$, where $\Theta_0$ is a value defined  in \eqref{eq:Theta0} below.  This is a consequence of the spectral theory of the magnetic Schr\"odinger operator in the half-plane and the plane respectively.  In fact, if $K\Subset\Omega_\alpha$ is an unbounded domain such that $0\not\in \overline{K}$, and if $u\in  C_0^\infty(\R^2)$ with support in $\overline{K}$, then the following inequality holds
$$\int_{\Omega_\alpha}|(\nabla-i\Fb)u|^2\,dx\geq \int_{\Omega_\alpha}|u|^2\,dx\,.
$$
 If  $\overline{K}$ intersects one edge of $\Omega_\alpha$ and $u=0$ on $\Omega_\alpha\cap\partial K$, then we have the alternative inequality
$$\int_{\Omega_\alpha}|(\nabla-i\Fb)u|^2\,dx\geq \Theta_0\int_{\Omega_\alpha}|u|^2\,dx\,.
$$ 
%
%{\clr Bernard: C'est un r\'esultat de \cite{BF} ? \clb Ayman: C'est n'est pas mentione\'e explicitement, mais c'est facile \`a d\'emontrer. On peut d\'ecouper le domaine $\Omega_\alpha$ sur quatres parties, un secteur born\'e $U_1$, deux demi-plans $U_2$ et $U_3$, et $U_4\subset\subset\Omega_\alpha$ qui ne recontre pas le bord et le vertex du $\Omega_\alpha$. Les contributions qui vient de $U_2$, $U_3$ et $U_4$ sont born\'ees inf\'erieurement ssi $\mu\leq \Theta_0$.\\}{\clr OK. Il faut juste \'ecrire une phrase correspondant \`a ta r\'eponse.\\}
%
Define the ground state energy 
\begin{equation}\label{eq:ref-en*}
E(\mu,\alpha)=\inf_{u\in W^{1,2}_{\Fb}(\Omega_\alpha)} J_{\mu,\alpha}(u)\,.
\end{equation}
The functional \eqref{eq:ref-en} has non-trivial minimizers if and only if $\mu$ satisfies a spectral condition, namely
\begin{equation}\label{eq:mu>mu1}
\mu>\mu_1(\alpha)\,,
\end{equation}
where $\mu_1(\alpha)$ is the smallest eigenvalue of the operator $P_\alpha=-(\nabla-i\Fb)^2$ in $L^2(\Omega_\alpha)$ with Neumann boundary conditions,
\begin{equation}\label{eq:mu1}
\mu_1(\alpha)=\inf_{u\in { W_\Fb^{1,2}(\Omega_\alpha)\setminus\{0\}}} \frac{\int_{\Omega_\alpha}|(\nabla -i\Fb)u|^2\,dx}{\int_{\Omega_\alpha}|u|^2\,dx}\,.
\end{equation}

The bottom of the essential spectrum of the operator $P_\alpha$ is independent of $\alpha$ (see \cite{Bon}) 
\begin{equation}\label{eq:Theta0}
\Theta_0=\inf\sigma_{\rm ess}(P_\alpha)
\end{equation}
and its approximate value is $0.59$. $\Theta_0$ can be defined starting from a family of harmonic oscillators on the semi-axis and is commonly called the de\,Gennes constant.  It is also the bottom of the spectrum of the operator $P_{\Omega_\alpha}$ in the half-space case, i.e. when $\alpha=\pi$.

\subsection{Earlier results}

In a specific asymptotic regime of the applied magnetic field,  superconductivity is confined  near at least one corner of the domain; see \cite{BF}. This happens under the following assumption on the domain:
\begin{assump}\label{ass:corners}
$$ \forall~\mathsf s_k \in\Sigma\,,\quad \mu_1(\alpha_k)<\Theta_0\,,$$
where $\Sigma$ is the set of vertices of the domain $\Omega$ introduced in \eqref{eq:Sigma}.
\end{assump}

Examples of angles $\alpha$ satisfying the spectral condition $\mu_1(\alpha)<\Theta_0$ are discussed in \cite{Bon, Pa}. Recently,  in \cite{ELP},
 it is proved that the inequality $\mu_1(\alpha)<\Theta_0$ holds for all $\alpha\in(0,0.595\pi)$.   It is actually conjectured in  \cite{Bon} (and supported by numerical evidence)  that this inequality holds if and only if $\alpha\in(0,\pi)$. 
If this conjecture was proved, Assumption \ref{ass:corners} would be simply  interpreted as the domain   $\Omega$ being convex.\\

If Assumption~\ref{ass:corners} holds, then for all $\mu\in (0,\Theta_0)$,  it is proven in \cite{BF} that the ground state energy in \eqref{eq-gse}  satisfies  for $H=\mu^{-1}\kappa$ and  $\kappa\to+\infty$
\begin{equation}\label{eq:BonF}
\E=\sum_{\mu_1(\alpha_k)<\mu}E(\mu,\alpha_k)+o(1)\,.
\end{equation}
This result suggests that superconductivity in this regime is confined in the corners satisfying the spectral condition $\mu_1(\alpha_k)<\mu\,$.  This  will be made precise by our results below.  

\subsection{New results}
 Under Assumption~\ref{ass:corners} and for
$$
\mu \in (0,\Theta_0)\,,\, H =\mu^{-1} \kappa\,,
$$
(hence with  \eqref{eq:BonF} satisfied), we will give in the limit $\kappa \rightarrow +\infty$,  for any minimizer $(\psi,\Ab)_{\kappa,H}$:
\begin{itemize}
\item the asymptotics for the local energy   (i.e. energy in any domain $D\subset\Omega$) of $\psi$,
\begin{equation}\label{eq:loc-en}
\mathcal E_{\rm GL}(\psi,\Fb;D):=\int_D \left(|(\nabla-i\kappa H\Fb)u|^2-\kappa^2|\psi|^2+\frac{\kappa^2}{2}|\psi|^4\right)\,dx\,,
\end{equation}
\item the asymptotics for the  local integrated density:
$$\int_D|\psi (x)|^2\,dx\quad{\rm and}\quad \int_D|\psi(x)|^4\,dx\,,$$
for an arbitrary open set $D$ in  $\Omega$.
\end{itemize}
Let us come back to the reference energy $E(\mu,\alpha)$ in \eqref{eq:ref-en*}. For every $\alpha$, it is easy to check that the function $\mu\mapsto E(\mu,\alpha)$ is concave. Consequently, the left- and right-sided derivatives with respect to $\mu$
$$E'(\mu_\pm,\alpha):=\lim_{\epsilon\to0_\pm}\frac{E(\mu+\epsilon)-E(\mu)}{\epsilon}$$
exist and the set
\begin{equation}\label{eq:E-diff}
\mathcal S_\alpha=\{\mu>0~:~E'(\mu_+,\alpha)\not=E'(\mu_-,\alpha)\}
\end{equation}
is at most countable.\\

For all $x\in\R^2$ and $\ell>0$, we use the notation
\begin{equation}\label{eq:N(x,ell)}
\mathcal N(x,\ell)=\{y\in\Omega~:~|y-x|<\ell\}\,.
\end{equation}
Our main result is:
\begin{thm}\label{thm:HK}

Suppose that Assumption~\ref{ass:corners} holds. For $\delta\in(\frac{4}5,1)$ and $\mu\in(0,\Theta_0)$, there exist $\kappa_0>0$ and
  a function $\lambda:\R_+\to\R_+$ such that $\displaystyle\lim_{\kappa\to+\infty}\lambda(\kappa)=0$ and the following holds true.

If $(\psi,\Ab)_{\kappa, H}$ is a minimizer of the functional in \eqref{eq-3D-GLf} for $H=\mu^{-1}\kappa$ and $\kappa\geq \kappa_0$, then for all  $k\in\{1,\cdots,m\}$, and with $\ell=\kappa^{-\delta}$, we have:
\begin{enumerate}
\item 
$$-E'(\mu_-,\alpha_k)-\lambda(\kappa)\leq \int_{\mathcal N(\mathsf s_k,\ell)}|(\nabla-i\kappa H\Ab)\psi|^2\,dx\leq -E'(\mu_+,\alpha_k)+\lambda(\kappa)\,.$$
\item 
$$-E'(\mu_-,\alpha_k)-E(\mu,\alpha_k) -\lambda(\kappa)\leq \kappa^2\int_{\mathcal N(\mathsf s_k,\ell)} |\psi(x)|^2\,dx\leq  -E'(\mu_+,\alpha_k)-E(\mu,\alpha_k)+ \lambda(\kappa)\,.$$
\item 
$$\left|\kappa^2\int_{\mathcal N(\mathsf s_k,\ell)} |\psi(x)|^4\,dx+2E(\mu,\alpha_k)\right|\leq \lambda(\kappa)\,.$$
\end{enumerate}
\end{thm}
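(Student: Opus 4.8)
The plan is to reduce everything, corner by corner, to the reference model $J_{\mu,\alpha_k}$ on the infinite sector $\Omega_{\alpha_k}$, and then to read off the three quantities from \emph{energy localisation} together with the \emph{virial (testing) identities} and a \emph{Feynman--Hellmann argument}. Fix a corner $\mathsf s_k$ and abbreviate
$$T_k=\int_{\mathcal N(\mathsf s_k,\ell)}|(\nabla-i\kappa H\Ab)\psi|^2\,dx,\quad P_k=\kappa^2\int_{\mathcal N(\mathsf s_k,\ell)}|\psi|^2\,dx,\quad F_k=\kappa^2\int_{\mathcal N(\mathsf s_k,\ell)}|\psi|^4\,dx.$$
First I would recall the a priori facts underlying \eqref{eq:BonF}: the induced field is close to the applied one ($\curl\Ab\to1$, so $\Ab$ may be replaced by $\Fb$ up to a gauge with negligible cost), the order parameter is exponentially small away from $\Sigma$, and after the magnetic rescaling $y=\sqrt{\kappa H}\,(x-\mathsf s_k)$ the restriction of $\psi$ converges to a minimizer $u_\mu$ of $J_{\mu,\alpha_k}$. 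Since $\kappa H=\mu^{-1}\kappa^2$, this rescaling turns $\kappa^2|\psi|^2\,dx$ into $\mu|u_\mu|^2\,dy$ and $|(\nabla-i\kappa H\Ab)\psi|^2\,dx$ into $|(\nabla-i\Fb)u_\mu|^2\,dy$; consequently the \emph{local} energy $\mathcal E_{\rm GL}(\psi,\Fb;\mathcal N(\mathsf s_k,\ell))$ equals $E(\mu,\alpha_k)+o(1)$, the localised version of \eqref{eq:BonF} and the quantitative input I would import from \cite{BF,HK}. The choice $\delta\in(\tfrac45,1)$ guarantees that the disc $\mathcal N(\mathsf s_k,\ell)$, once rescaled, exhausts the sector while its boundary arc sits deep in the exponentially small region.

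Next I would extract two algebraic relations among $T_k,P_k,F_k$. Testing the first equation in \eqref{eq:GL} against $\bar\psi$ over $\mathcal N(\mathsf s_k,\ell)$ and discarding the boundary term (zero on $\partial\Omega$ by the Neumann condition, negligible on the inner arc by exponential decay) gives the virial identity $T_k=P_k-F_k+o(1)$. On the other hand, by definition $\mathcal E_{\rm GL}(\psi,\Fb;\mathcal N(\mathsf s_k,\ell))=T_k-P_k+\tfrac12F_k$. Combining these with the energy localisation $\mathcal E_{\rm GL}(\psi,\Fb;\mathcal N(\mathsf s_k,\ell))=E(\mu,\alpha_k)+o(1)$ immediately yields $F_k=-2E(\mu,\alpha_k)+o(1)$, which is assertion (3), and leaves a single remaining degree of freedom, since (1) and (2) are interchanged through the virial relation $T_k-P_k=-F_k+o(1)$. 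Thus it remains to pin down one of $T_k,P_k$ individually.

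The last --- and decisive --- ingredient is a Feynman--Hellmann/convexity argument in the parameter $\mu$. For the fixed near-minimizer $u$ produced by the rescaling, $J_{\mu,\alpha_k}(u)=E(\mu,\alpha_k)+o(1)$ while $\mu\mapsto J_{\mu,\alpha_k}(u)$ is \emph{affine} with slope $-\int|u|^2+\tfrac12\int|u|^4$; comparing $u$ against genuine minimizers at $\mu\pm\epsilon$ and using that $\mu\mapsto E(\mu,\alpha_k)$ is \emph{concave} (an infimum of affine functions) sandwiches this slope between the one-sided derivatives $E'(\mu_+,\alpha_k)$ and $E'(\mu_-,\alpha_k)$, up to an error controlled by $\lambda(\kappa)/\epsilon$. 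Translating $\int|u|^2,\int|u|^4$ back to $P_k,F_k$ via the rescaling converts this into two-sided bounds for $P_k$ (hence, through the virial relation, for $T_k$) in terms of $E'(\mu_\pm,\alpha_k)$ and $E(\mu,\alpha_k)$; letting first $\kappa\to\infty$ and then $\epsilon\to0$ (the concavity makes the difference quotients converge monotonically to $E'(\mu_\pm,\alpha_k)$) produces the stated inequalities (1) and (2), the two-sided bounds collapsing to equalities precisely off the at-most-countable set $\mathcal S_{\alpha_k}$ of \eqref{eq:E-diff} where $E(\cdot,\alpha_k)$ fails to be differentiable.

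The main obstacle I anticipate is making this Feynman--Hellmann step \emph{uniform}: the localisation error $\lambda(\kappa)$ must be shown to be independent of the perturbation $\epsilon$ and to arise from genuinely local estimates, so that dividing by $\epsilon$ and then sending $\epsilon\to0$ is legitimate. Concretely this requires (i) transplanting the rescaled physical minimizer into the sector, and conversely the reference minimizer into $\Omega$ near $\mathsf s_k$, as admissible competitors with cost $o(1)$ uniformly in $\mu$ on a small interval --- which is where the sharp decay estimates and the replacement $\Ab\rightsquigarrow\Fb$ of \cite{HK} are essential --- and (ii) handling the non-uniqueness of the reference minimizer at points of $\mathcal S_{\alpha_k}$, which is precisely the reason only the one-sided derivatives, and not a single value, can appear. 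Everything else (the virial identity, the concavity, and the bookkeeping giving (3) and reducing (1)--(2) to a single density) is routine once these uniform local estimates are in hand.
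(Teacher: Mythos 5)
Your proposal follows essentially the same route as the paper's proof: energy localisation at each corner with quantitative errors (the paper's Proposition~\ref{prop:Ka-SIMA}), the identity obtained by testing the first equation of \eqref{eq:GL} against a localised $f^2\overline{\psi}$ to extract $\kappa^2\int|\psi|^4\approx-2E(\mu,\alpha_k)$, and then a perturbation-in-$\mu$ argument combined with the concavity of $E(\cdot,\alpha_k)$ to sandwich $\kappa^2\int|\psi|^2$ between the one-sided derivatives $E'(\mu_\pm,\alpha_k)$ --- your Feynman--Hellmann step is exactly the paper's computation with the $\epsilon$-perturbed local functional $\mathcal E_\epsilon$, rephrased on the rescaled reference problem. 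The order of deductions (assertion (3) first, then (2), then (1) via the energy decomposition) and the key technical point you flag (uniformity in $\epsilon$ of the localisation error before dividing by $\epsilon$ and sending $\kappa\to+\infty$, then $\epsilon\to0_\pm$) coincide with Section~\ref{sec:proof} of the paper.
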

\begin{rem}\label{rem:thm:HK}
Theorem~\ref{thm:HK} asserts that superconductivity is present in the set $\mathcal N(\mathsf s_k,\ell)$ as long as $\mu_1(\alpha_k)<\mu$, the necessary and sufficient condition to get $E(\mu,\alpha_k)\not=0$. 
In the particular case of a domain with symmetries, where   one finds two corners $\mathsf s_k$ and $\mathsf s_{k'}$ with the same angle $\alpha_k=\alpha_{k'}=:\alpha$, Theorem~\ref{thm:HK} demonstrates that superconductivity is present near both corners with the same strength   provided that $\mu_1(\alpha)<\mu$. This phenomenon can be seen as a non-linear form of the  tunneling  effect, expected to occur for the linear problem in domains with symmetries. We refer to \cite{BHR} for details reagrding the linear problem.
\end{rem}

\begin{rem}\label{rem:thm-cf}{\bf(Critical fields)} 
 By relabeling the vertices $\{\mathsf s_1,\cdots,\mathsf s_m\}$, we may assume that the corresponding angles satisfy
$\mu_1(\alpha_1)\geq \cdots\geq \mu_1(\alpha_m)$.
Combining the results in Theorem~\ref{thm:HK} above and those in \cite{BF, CG}, we may introduce  under Assumption \ref{ass:corners} the following critical fields,
$$
H_{C_2}\leq H_{C_3}^{0}< H_{C_3}^{1}\leq \cdots \leq H_{C_3}^m
$$
defined as follows
$$
H_{C_2}=\kappa\,,\quad H_{C_3}^{0}=\frac{\kappa}{\Theta_0}\,,\quad H_{C_3}^\ell=\frac{\kappa}{\mu_1(\alpha_{ \ell})}\quad(\ell\in\{1,\cdots,m\})\,,$$
Loosely speaking, these critical fields correspond to the following transitions. Below $H_{C_2}$, bulk superconductivity persists \cite{SS02}; between $H_{C_2}$ and $H_{C_3}^{0}$, surface superconductivity is uniformly distributed (in the $L^2$ sense) along the whole surface \cite{CG};  between $H_{C_3}^{\ell}$ and $H_{C_3}^{\ell+1}$ (with $0\leq \ell \leq m-1$ and $\mu_1 (\alpha_{\ell }) > \mu_1 (\alpha_{\ell+1}) )$, superconductivity is confined near the vertex $\mathsf s_k$ (\cite{BF} and Theorem~\ref{thm:HK} above); above $H_{C_3}^m$, superconductivity disappears everywhere \cite{BF}.  The critical field $H_{C_3}^m$ is actually the principal term of the third critical field $H_{C_3}$ whose precise definition and asymptotics are discussed in  \cite{BF}.
%should actually be defined more correctly as the critical field $H_{C_3}$, $H_{C_3}^m$ being only its main asymptotics.
\end{rem}
\section{Former estimates}

\subsection{On the global Ginzburg-Landau energy}
We recall  that if $(\psi,\Ab)_{\kappa,H}$ is a critical point to the functional in \eqref{eq-3D-GLf}, then we can apply 
a gauge transformation and assume that $(\psi,\Ab)_{\kappa, H}$ satisfies (cf. \cite[pp. 248-249, Eqs.~(15.17)-(15.19)]{FH} and \cite{BF})
\begin{equation}\label{eq:mag-en}
{\rm div}\Ab=0~{\rm in~}\Omega\quad{\rm and}
\quad \|\Ab-\Fb\|_{W^{1,2}(\Omega)}\leq C\, \|\curl\Ab-1\|_{L^2(\R^2)}\,.
\end{equation}
Hereafter,  a critical point of the functional in \eqref{eq-3D-GLf} is supposed to satisfy \eqref{eq:mag-en}. We will not mention this explicitly afterwards.

Next  we collect useful estimates regarding the critical points of the Ginzburg-Landau functional 
(cf. \cite[Eqs.~(15.20)-(15.22) \& Lem.~15.3.3]{FH}).

\begin{prop}\label{prop:FH-b}
Let $\mu\in(0,\Theta_0)$.  There exist two constants
$C>0$ and $\kappa_0>0$ such that, if $\kappa\geq \kappa_0$, $H=\mu^{-1}\kappa$ and $(\psi,\Ab)_{\kappa,H}$ is a critical point of the functional in \eqref{eq-3D-GLf}, then:
\begin{align}
&\|\psi\|_{L^\infty(\Omega)}\leq1\,,\label{eq:psi<1}\\
&\|(\nabla-i\kappa H\Ab)\psi\|_{L^2(\Omega)}\leq C\, \kappa \, \|\psi\|_{L^2(\Omega)}\,,\label{eq:grad-psi}\\
& \|\curl\Ab-1\|_{L^2(\R^2)}\leq \frac{C}{\kappa}\, \|\psi\|_{L^2(\Omega)}\, \|\psi\|_{L^4(\Omega)}\,,\label{eq:curl=cst}\\
&\|\Ab-\Fb\|_{L^4(\Omega)}\leq \frac{C}{\kappa}\, \|\psi\|_{L^2(\Omega)}\,\|\psi\|_{L^4(\Omega)}\,.\label{eq:A=F}
\end{align}
\end{prop}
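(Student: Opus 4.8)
The plan is to prove the four bounds in the stated order, taking advantage of the fact that \eqref{eq:psi<1} and \eqref{eq:grad-psi} follow from the first Ginzburg--Landau equation in \eqref{eq:GL} alone, whereas \eqref{eq:curl=cst} and \eqref{eq:A=F} are mutually coupled and must be closed against the gauge-fixing estimate \eqref{eq:mag-en}. These are standard a priori bounds (essentially \cite[Eqs.~(15.20)--(15.22) \& Lem.~15.3.3]{FH}), and the arguments below reproduce the classical ones. Throughout I abbreviate $\Pi=\nabla-i\kappa H\Ab$, and I use the magnetic Neumann condition $\nu\cdot\Pi\psi=0$ on $\partial\Omega$ to discard boundary terms, together with the Coulomb gauge $\Div\Ab=0$ from \eqref{eq:mag-en}.

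For \eqref{eq:psi<1} I would use the weak maximum principle. Writing the first equation as $-\Pi^2\psi=\kappa^2(1-|\psi|^2)\psi$ and invoking the identity (valid since $\Div\Ab=0$)
$$\tfrac12\Delta|\psi|^2=|\Pi\psi|^2+\mathrm{Re}(\bar\psi\,\Pi^2\psi),$$
one obtains, on the set $\{|\psi|^2>1\}$,
$$-\tfrac12\Delta|\psi|^2=\kappa^2(1-|\psi|^2)|\psi|^2-|\Pi\psi|^2\le 0.$$
Testing this against $(|\psi|^2-1)_+\in W^{1,2}(\Omega)$ and integrating by parts — the boundary term vanishing because $\partial_\nu|\psi|^2=2\mathrm{Re}(\bar\psi\,\nu\cdot\Pi\psi)=0$ — forces $\nabla(|\psi|^2-1)_+=0$, so that $(|\psi|^2-1)_+$ is constant on the connected set $\Omega$; feeding a positive constant back into the equation would make $|\Pi\psi|^2<0$, hence $(|\psi|^2-1)_+\equiv 0$ and $\|\psi\|_{L^\infty(\Omega)}\le 1$. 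For \eqref{eq:grad-psi} I would simply pair the first equation with $\bar\psi$ and integrate by parts:
$$\int_\Omega|\Pi\psi|^2\,dx=\kappa^2\int_\Omega(1-|\psi|^2)|\psi|^2\,dx\le\kappa^2\|\psi\|_{L^2(\Omega)}^2,$$
which gives the claim (with $C=1$).

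The core step is \eqref{eq:curl=cst}. I would test the weak form of the second equation in \eqref{eq:GL} with the admissible field $\Ab-\Fb\in W^{1,2}_{0,0}(\R^2;\R^2)$; since $\curl\Fb=1$, this yields
$$\kappa H\int_{\R^2}|\curl\Ab-1|^2\,dx=\int_\Omega(\Ab-\Fb)\cdot\mathrm{Im}(\bar\psi\,\Pi\psi)\,dx.$$
Bounding the integrand by $|\Ab-\Fb|\,|\psi|\,|\Pi\psi|$ and applying H\"older with exponents $(4,4,2)$ bounds the right-hand side by $\|\Ab-\Fb\|_{L^4(\Omega)}\|\psi\|_{L^4(\Omega)}\|\Pi\psi\|_{L^2(\Omega)}$. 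The apparent circularity — the $L^4$-norm of $\Ab-\Fb$ reappearing — is broken by \eqref{eq:mag-en} together with the two-dimensional Sobolev embedding $W^{1,2}(\Omega)\hookrightarrow L^4(\Omega)$, which give $\|\Ab-\Fb\|_{L^4(\Omega)}\le C\|\curl\Ab-1\|_{L^2(\R^2)}$. Substituting, dividing by $\|\curl\Ab-1\|_{L^2(\R^2)}$, and using \eqref{eq:grad-psi} with $\kappa H=\mu^{-1}\kappa^2$ produces \eqref{eq:curl=cst} (with a constant depending on the fixed $\mu$). Finally \eqref{eq:A=F} is immediate: combining \eqref{eq:mag-en}, the same Sobolev embedding, and the just-proved \eqref{eq:curl=cst} gives $\|\Ab-\Fb\|_{L^4(\Omega)}\le C\|\curl\Ab-1\|_{L^2(\R^2)}\le C\kappa^{-1}\|\psi\|_{L^2(\Omega)}\|\psi\|_{L^4(\Omega)}$.

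I expect the only genuine obstacle to be the self-referential structure of \eqref{eq:curl=cst}--\eqref{eq:A=F}; everything else is a direct consequence of the field equations. The resolution rests entirely on the a priori control \eqref{eq:mag-en} of $\Ab-\Fb$ in $W^{1,2}(\Omega)$ by $\curl\Ab-1$, which is precisely where the Coulomb gauge choice $\Div\Ab=0$ enters.
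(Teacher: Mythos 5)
Your argument is correct and is essentially the standard one: the paper itself gives no proof of Proposition~\ref{prop:FH-b}, recalling it from \cite{FH} (Eqs.~(15.20)--(15.22) and Lem.~15.3.3), and your four steps --- the maximum-principle identity for $\tfrac12\Delta|\psi|^2$, pairing the first equation with $\overline\psi$, testing the second equation with $\Ab-\Fb$, and closing the loop via \eqref{eq:mag-en} together with the embedding $W^{1,2}(\Omega)\hookrightarrow L^4(\Omega)$ --- reproduce exactly the proofs given there. The only points worth flagging are routine: the division by $\|\curl\Ab-1\|_{L^2(\R^2)}$ requires the trivial case to be set aside, and the integrations by parts presuppose the standard regularity of critical points near the corners, which is handled in \cite{FH,BF}.
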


To state the results concerning the  concentration of the order parameter near the corners $(\mathsf s_k)_{1\leq k\leq n}\,$, we introduced the following set of vertices, for $\mu>0\,$,
\begin{equation}\label{eq:vertices*}
\Sigma'(\mu)=\{\mathsf s_k~:~\mu_1(\alpha_k)\leq \mu\}\,.
\end{equation}
We also introduce the following quantity
\begin{equation}\label{eq:Lambda1}
\Lambda_1=\min_{ 1\leq k\leq m}\mu_1(\alpha_k)\,.
\end{equation} 
 
\begin{prop}\label{prop:BonF} {\rm  (\cite{BF})}

Assume that $\Lambda_1$ satisfies $0<\Lambda_1<\Theta_0\,$. Given $\mu\in(\Lambda_1,\Theta_0)$, there exist positive constants $\kappa_0$, $\epsilon$ and $C$ such that, for all $\kappa\geq \kappa_0$, $H=\mu^{-1}\kappa$ and $(\psi,\Ab)_{\kappa,H}$  a solution of \eqref{eq:GL}, 
$$\int_{\Omega}e^{\epsilon\,\kappa{\rm \,dist}(x,\Sigma'(\mu))} \left(|\psi|^2+\frac{1}{\kappa^2}|(\nabla-i\kappa H\Ab)\psi|^2\right) \,dx\leq \frac{C}{\kappa^2}\,.$$
\end{prop}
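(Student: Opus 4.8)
The plan is to run a weighted (Agmon-type) energy estimate driven by the first Ginzburg--Landau equation, using the spectral lower bounds recalled after \eqref{eq:ref-en*} to build an effective potential that exceeds $\kappa^2$ everywhere except within a magnetic length of the vertices in $\Sigma'(\mu)$. Throughout set $\Pi=\nabla-i\kappa H\Ab$ and let $\Phi=\tfrac\epsilon2\,\kappa\,\dist(\cdot,\Sigma'(\mu))$, a Lipschitz weight with $|\nabla\Phi|\le\tfrac\epsilon2\kappa$, and $w=e^{\Phi}\psi$, so that $e^{2\Phi}|\psi|^2=|w|^2$. First I would multiply the first equation of \eqref{eq:GL} by $e^{2\Phi}\overline\psi$, integrate over $\Omega$, and integrate by parts; the magnetic Neumann condition $\nu\cdot\Pi\psi=0$ annihilates the boundary term and yields the magnetic IMS identity
\begin{equation*}
\int_\Omega|\Pi w|^2\,dx-\int_\Omega|\nabla\Phi|^2|w|^2\,dx=\kappa^2\int_\Omega(1-|\psi|^2)|w|^2\,dx.
\end{equation*}
Since $\|\psi\|_{L^\infty}\le1$ by \eqref{eq:psi<1} and $|\nabla\Phi|^2\le\tfrac{\epsilon^2}4\kappa^2$, this already gives the upper bound $\int_\Omega|\Pi w|^2\,dx\le(1+\tfrac{\epsilon^2}4)\kappa^2\int_\Omega|w|^2\,dx$.

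Next I would produce a matching lower bound with a localized effective potential. Introducing a partition of unity $(\chi_j)$ at a scale $\rho$ with $\kappa^{-1}\ll\rho\ll1$, the IMS formula costs $C\rho^{-2}\int|w|^2$; on each cell I would reduce $\kappa H\Ab$ to the model potential $\kappa H\Fb$ (controlling the errors through \eqref{eq:curl=cst}--\eqref{eq:A=F}), rescale by $\sqrt{\kappa H}$, and invoke the model inequalities stated after \eqref{eq:ref-en*}: the bulk bound $1$, the smooth-boundary bound $\Theta_0$, and the corner bound $\mu_1(\alpha_k)$. This produces
\begin{equation*}
\int_\Omega|\Pi w|^2\,dx\ge\int_\Omega\mathcal W_\kappa(x)|w|^2\,dx-C\rho^{-2}\int_\Omega|w|^2\,dx,
\end{equation*}
where $\mathcal W_\kappa\ge\Theta_0\kappa H$ away from the vertices and $\mathcal W_\kappa\ge\mu_1(\alpha_k)\kappa H$ near $\mathsf s_k$. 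The decisive point, resting on the strict gap $\mu_1(\alpha_k)<\Theta_0$ and the exponential decay of the sector ground states, is that $\{\mathcal W_\kappa<\Theta_0\kappa H\}$ is confined to an $O(\kappa^{-1})$ neighborhood of the vertices.

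With $\kappa H=\mu^{-1}\kappa^2$, on $\{\dist(x,\Sigma'(\mu))>C/\kappa\}$ every surviving cell is of bulk, smooth-boundary, or bad-vertex type ($\mathsf s_k\notin\Sigma'(\mu)$, i.e. $\mu_1(\alpha_k)>\mu$), so $\mathcal W_\kappa\ge(\mu+\eta)\kappa H=(1+\eta/\mu)\kappa^2$ with $\eta=\min\{1-\mu,\Theta_0-\mu,\min_{\mu_1(\alpha_k)>\mu}(\mu_1(\alpha_k)-\mu)\}>0$. Choosing $\epsilon$ small and $\rho^{-2}=o(\kappa^2)$, subtracting the two displays yields
\begin{equation*}
\eta\kappa^2\int_{\{\dist(x,\Sigma'(\mu))>C/\kappa\}}|w|^2\,dx\le C\kappa^2\int_{\{\dist(x,\Sigma'(\mu))\le C/\kappa\}}|w|^2\,dx.
\end{equation*}
On the right-hand region the weight is bounded, $e^{2\Phi}\le e^{C\epsilon}$, so that side is $\le C\kappa^2\|\psi\|_{L^2(\Omega)}^2$; hence $\int_\Omega e^{\epsilon\kappa\,\dist(x,\Sigma'(\mu))}|\psi|^2\,dx\le C\|\psi\|_{L^2(\Omega)}^2$, and feeding this back into the first-step upper bound controls $\kappa^{-2}\int e^{\epsilon\kappa\,\dist(x,\Sigma'(\mu))}|\Pi\psi|^2\,dx$ as well. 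The factor $C/\kappa^2$ then follows from the a priori estimate $\|\psi\|_{L^2(\Omega)}^2\le C\kappa^{-2}$, itself a consequence of the energy asymptotics \eqref{eq:BonF} together with \eqref{eq:grad-psi}.

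The main obstacle is the lower bound of the second step, and specifically getting the localization scale right. One must reconcile two competing demands on $\rho$: it should be large enough that the IMS error $\rho^{-2}$ is $o(\kappa^2)$, yet fine enough that the sub-$\kappa^2$ region is pinned to the magnetic length $O(\kappa^{-1})$ around $\Sigma'(\mu)$, since otherwise the weight $e^{2\Phi}$ is not $O(1)$ on the exceptional set and the clean $C/\kappa^2$ bound degrades. Resolving this tension forces a two-scale (or bootstrap) refinement near the vertices, leaning essentially on the strict inequality $\mu_1(\alpha_k)<\Theta_0$ and on the exponential decay of the corner ground states to confine the corner influence to the magnetic length.
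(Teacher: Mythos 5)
The paper itself gives no proof of this proposition --- it is quoted from \cite{BF}, where it is established by exactly the scheme you propose (weighted identity from the first GL equation plus a spectral lower bound with an effective potential). So your architecture is the right one: the Agmon identity of your first step is correct (modulo the standard precaution of running it with the truncated weight $\min(\Phi,m)$ and letting $m\to\infty$, since $e^{\Phi}\psi$ is not a priori in $L^{2}$ and the distance function is only Lipschitz), and the problem is correctly reduced to showing that the effective potential exceeds $(1+\eta)\kappa^{2}$ outside a set on which $e^{2\Phi}$ stays bounded.

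The gap is exactly where you flag it, and flagging it is not the same as closing it. A single partition of unity at scale $\rho$ with the cell-wise constants $1$, $\Theta_{0}$, $\mu_{1}(\alpha_{k})$ (times $\kappa H$) only gives $\mathcal W_{\kappa}\ge\mu_{1}(\alpha_{k})\kappa H\le\kappa^{2}$ on the \emph{entire} corner cell of diameter $\rho$, while the IMS constraint $\rho^{-2}=o(\kappa^{2})$ forces $\rho\gg\kappa^{-1}$; on such a cell $e^{2\Phi}$ can be as large as $e^{\epsilon\kappa\rho}\to\infty$, so your final display degenerates. The ``decisive point'' --- that $\{\mathcal W_{\kappa}<\Theta_{0}\kappa H\}$ is pinned to $O(\kappa^{-1})$ neighborhoods of the vertices --- therefore does not follow from the construction you actually give. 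What closes it is a second, finer localization \emph{inside} each corner cell, cutting at rescaled distance $t_{0}/\sqrt{\kappa H}$ from the tip and applying the two model inequalities recalled after \eqref{eq:ref-en*} (the bulk bound $1$ for pieces interior to the sector, the bound $\Theta_{0}$ for pieces meeting a single edge) to upgrade the effective potential to $(\Theta_{0}-Ct_{0}^{-2})\kappa H>(\mu+\eta)\kappa H$ at distance $\ge t_{0}/\sqrt{\kappa H}$ from the vertex, with $t_{0}$ large but fixed; only the tip region of size $t_{0}/\sqrt{\kappa H}\sim\kappa^{-1}$ then survives in the exceptional set. Until this two-scale step is written out, the proof is incomplete at its one genuinely delicate point. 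Two smaller remarks: (i) deriving $\|\psi\|_{L^{2}}^{2}\le C\kappa^{-2}$ from \eqref{eq:BonF} and \eqref{eq:grad-psi} is not straightforward and risks circularity (that bound is normally a \emph{consequence} of the present proposition); it is cleaner to bound the right-hand side directly by $\|\psi\|_{L^\infty}^{2}\,\bigl|\{\dist(x,\Sigma'(\mu))\le C/\kappa\}\bigr|\le C\kappa^{-2}$ using \eqref{eq:psi<1}; (ii) the reduction from $\Ab$ to $\Fb$ via \eqref{eq:curl=cst}--\eqref{eq:A=F} itself needs an a priori bound on $\|\psi\|_{L^{2}}\|\psi\|_{L^{4}}$, which should be stated.
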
 

\subsection{Change of coordinates}

Let $\mathsf s=\mathsf s_k\in\Sigma$ be a vertex of the $\Omega$ (see \eqref{eq:Sigma}) and $\alpha=\alpha_k$. 
We may select a constant $\ell_0>0$ (independent of $\mathsf s$) and a diffeomorphism  $\Phi_{\mathsf s}$ of $\R^2$ such that $\Phi_{\mathsf s}=0$ and, and for all $\ell\in(0,\ell_0)$, $\Phi_{\mathsf s}(\mathcal  N(\mathsf s,\ell))=\Omega_\alpha\cap \Phi_{\mathsf s}(B(\mathsf s,\ell))$ and $|{\rm det}(D\Phi_{\mathsf s})(\mathsf s)|=1$, where $\mathcal N(\mathsf s,\ell)$ and $\Omega_\alpha$ are introduced in \eqref{eq:N(x,ell)} and \eqref{eq:Om-alpha} respectively. This transformation was used previously in \cite[Sec.~6.2]{BF}.

For every $(u,A)$ defined on $\mathcal N(\mathsf s,\ell)$, we assign the configuration $(\tilde u,\tilde A)$ by the change of coordinates $x\mapsto y= \Phi_{\mathsf s}(x)$. Assuming that $ {\rm supp}\,u\subset\{|x-\mathsf s|<\ell\}$,  we get the change of variables formulas
$$
\int_{\mathcal N(\mathsf s,\ell)}|(\nabla-iA)u|^2\,dx=\int_{\Omega_\alpha} \langle (\nabla-i\tilde A)\tilde u\,,\,G(y)(\nabla-i \tilde A)\tilde u\rangle\, a(y)\,dy\,,
$$
and
$$\int_{\mathcal N(\mathsf s,\ell)}|u(x)|^2\,dx=\int_{\Omega_\alpha} |\tilde u(y)|^2 a(y)\,dy\,,
$$
where $a(y)=|{\rm det}(D\Phi_{\mathsf s}^{-1})(y)|$, $G(y)=(D\Phi_{\mathsf s})(D\Phi_{\mathsf s})^T\big|_{\Phi_{\mathsf s}(y)}$,  and $ \tilde u (y)=u\big(\Phi_{\mathsf s}^{-1}(y)\big)$.

Since $$A_1\,dx_1+A_2\,dx_2=\tilde A_1\, dy_1+\tilde A_2\, dy_2\,,$$
 $\tilde A$ generates the magnetic field
$$\tilde B(y)=\partial_{y_1}\tilde A_2-\partial_{y_2}\tilde A_1= a(y) B(\Phi_{s}^{-1}(y))\,,$$
where $$B=\curl A=\partial_{x_1}A_2-\partial_{x_2}A_1\,.$$

In the particular case when the magnetic potential  is $\Fb$,  we can pick a smooth function $\eta$ such that
\begin{equation}\label{eq:F-eta}
\tilde\Fb(y)=\hat\Fb(y)+\nabla\eta(y)
\end{equation} 
where
$$
\hat\Fb(y)=\left(-\frac{y_2}2\,,\,\int_{0}^{y_1} \left(a(t,y_2)-\frac12\right)\,dt\right)=\Fb(y)+\mathcal O(\ell^2)\quad{\rm in}~\Omega_\alpha\cap \{|y|<\ell\}\,.
$$

\subsection{On the reference energy}~\\
We now come back to the reference functional introduced  in \eqref{eq:ref-en}.  Assume that $\alpha\in (0,2\pi)$,   $\mu_1(\alpha)<\Theta_0$\,,
and  $\mu\in(\mu_1(\alpha),\Theta_0)$. Under these conditions,  the functional in \eqref{eq:ref-en} is bounded from below and has a non identically $0$  minimizer $u_\alpha(=u_{\alpha,\mu})$ satisfying 
(cf. \cite{BF} or \cite[Prop.~15.3.10]{FH})
\begin{equation}\label{eq:ref-min<1}
\|u_\alpha\|_{L^\infty(\Omega_\alpha)}\leq 1\,,
\end{equation}
and
\begin{equation}\label{eq:ref-minimizer}
\int_{\Omega_\alpha} e^{2a_0 |x|} \big(|u_\alpha(x)|^2+|(\nabla-i\Fb)u_\alpha (x)|^2\big)\,dx\leq C\,,
\end{equation}
where $a_0$ and $C$ are two positive constants that depend on $\alpha$ and $\mu\,$.
%{\clr Attention $a$ est d\'ej\`a utilis\'e avec un autre sens dans la sous-section pr\'ec\'edente.\\}
Note that, if $\mu_1(\alpha)\geq \mu\,$, then $u_\alpha\equiv 0\,$.

%{\clg Ayman: Je pense que ce r\'esultat est vrai pour $\mu=\Theta_0$ aussi, mais la d\'emonstration est un peu plus compliqu\'ee dans ce cas.} 

\section{On the local energy of minimizers}\label{sec:ub-loc}

In this section and the rest of this paper, we assume that Assumption~\ref{ass:corners} holds, that $\Lambda_1$ is given by \eqref{eq:Lambda1}, and that $\Lambda_1<\mu<\Theta_0$ is a given constant.

For  any open set $D\subset\Omega$ and for any $(g,\mathbf a)\in\mathcal H$, we define the  the energy of $(g, \mathbf a)$ in $D$ by
\begin{equation}\label{eq:loc-en}
\mathcal E_0(f,\mathbf a;D)=\int_D\left(|(\nabla-i\kappa H\, \mathbf a )g|^2-\kappa^2|g|^2+\frac{\kappa^2}2|g|^4\right)\,dx\,.
\end{equation}

In the next proposition, we estimate the   energy  of $(\psi, \Ab)$ in $\mathcal N(\mathsf s,\ell)$, where $\mathsf s$ is a vertex of the domain $\Omega$,  by comparison with the reference energy $E(\mu,\alpha)$ introduced in \eqref{eq:ref-en*}. This result is new and not given in \cite{BF, CG} (especially the upper bound part).

\begin{prop}\label{prop:Ka-SIMA}
Let $\delta\in(\frac45,1)$. There exist positive constants $C\,$, $R_0\,$, and $\kappa_0$ such that for $\kappa\geq\kappa_0\,$, $H=\mu^{-1}\kappa\,$,
  the following inequalities hold with $\ell=\kappa^{-\delta}$
\begin{equation}
\begin{array}{lll}
-C\kappa^{2-3\delta}& \leq \mathcal E_0\Big( \psi,\Ab; \mathcal N\big(\mathsf s,\ell \big)\Big)- 
E(\mu,\alpha)&\leq C\,\kappa^{4-5\delta}\,,\medskip\\
 -C\,\kappa^{4-5\delta}& \leq \displaystyle\kappa^2\int_{\mathcal N(\mathsf s,\ell)}|\psi|^4\,dx+2E(\mu,\alpha)&\leq C\kappa^{2-3\delta}\,,
 \end{array}
\end{equation}
where $\mathsf s=\mathsf s_k\in\Sigma$ is a vertex of $\Omega$ (see \eqref{eq:Sigma}), $\alpha=\alpha_k\,$, $\Fb$ is introduced in \eqref{eq:F}  
%{\clr Renvoyer plut\^ot \`a la d\'efinition de $E(\mu,\alpha)$} 
and $(\psi,\Ab)_{\kappa,H}$ is a minimizer of the Ginzburg-Landau  functional introduced  in \eqref{eq-3D-GLf}.
\end{prop}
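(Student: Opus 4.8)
The plan is to transport the local energy to the model sector $\Omega_\alpha$ of \eqref{eq:Om-alpha} and compare it with the reference energy $E(\mu,\alpha)$ of \eqref{eq:ref-en*}. First I would use the diffeomorphism $\Phi_{\mathsf s}$ of the change‑of‑coordinates subsection to send $\mathcal N(\mathsf s,\ell)$ onto $\Omega_\alpha\cap\{|y|<\ell\}$, replace $\Ab$ by $\Fb$ and then by its transported form $\hat\Fb=\Fb+\mathcal O(|y|^2)$ from \eqref{eq:F-eta}, and finally rescale by the magnetic length through $z=\tau y$, with $\tau=\sqrt{\kappa H}=\kappa/\sqrt\mu$ and $v(z)=\tilde\psi(z/\tau)$. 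Since $\Fb$ is homogeneous of degree one the kinetic term is scale invariant, while $\kappa^2/\tau^2=\mu$ turns the quadratic and quartic terms into those of $J_{\mu,\alpha}$, so that
\[
\mathcal E_0\big(\psi,\Ab;\mathcal N(\mathsf s,\ell)\big)=J_{\mu,\alpha}(v)\big|_{\Omega_\alpha\cap\{|z|<\tau\ell\}}+\mathrm{Err},\qquad \tau\ell=\tfrac{1}{\sqrt\mu}\,\kappa^{1-\delta}\to+\infty .
\]
Here $\mathrm{Err}$ collects the metric deviations $a(y)-1,\ G(y)-I=\mathcal O(|y|)$, the potential perturbation $\hat\Fb-\Fb=\mathcal O(|y|^2)$, and the replacement of $\Ab$ by $\Fb$; the latter is controlled by \eqref{eq:curl=cst}--\eqref{eq:A=F} of Proposition~\ref{prop:FH-b}, and the exponential localization of $\psi$ about $\mathsf s$ given by Proposition~\ref{prop:BonF} confines the analysis to $|x-\mathsf s|=\mathcal O(\kappa^{-1}\log\kappa)$ and makes all truncations at $|z|\sim\tau\ell$ exponentially small. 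Bookkeeping the remaining polynomial contributions gives $|\mathrm{Err}|\le C\kappa^{2-3\delta}$.

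For the lower bound I would multiply $v$ by a cut‑off $\chi$ equal to $1$ on $\{|z|<\tau\ell/2\}$ and supported in $\{|z|<\tau\ell\}$. By the IMS localization formula the cut‑off costs only $\int|\nabla\chi|^2|v|^2\lesssim(\tau\ell)^{-2}\int_{\{|z|>\tau\ell/2\}}|v|^2$, which is exponentially small by Proposition~\ref{prop:BonF}. As $\chi v\in W^{1,2}_{\Fb}(\Omega_\alpha)$, the variational definition \eqref{eq:ref-en*} gives $J_{\mu,\alpha}(\chi v)\ge E(\mu,\alpha)$, whence $\mathcal E_0(\psi,\Ab;\mathcal N(\mathsf s,\ell))\ge E(\mu,\alpha)-C\kappa^{2-3\delta}$, the first lower bound.

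The upper bound is the genuinely new point and the main obstacle, since it cannot be read off from the infimum characterization of $E(\mu,\alpha)$. Here I would invoke the global minimality of $(\psi,\Ab)$ against the competitor equal to $\psi$ off $\mathcal N(\mathsf s,\ell)$ and to the rescaled reference minimizer $x\mapsto u_\alpha(\tau\Phi_{\mathsf s}(x))$ (in a gauge adapted to $\Fb$ and multiplied by a cut‑off) inside. Because the two configurations share the same vector potential and agree outside $\mathcal N(\mathsf s,\ell)$, the magnetic self‑energy and the exterior contributions cancel, and minimality collapses to $\mathcal E_0(\psi,\Ab;\mathcal N(\mathsf s,\ell))\le\mathcal E_0(u_\alpha(\tau\Phi_{\mathsf s}(\cdot)),\Fb;\mathcal N(\mathsf s,\ell))$. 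Undoing the change of variables and using the weighted decay \eqref{eq:ref-minimizer} (which makes the truncation of $u_\alpha$ at scale $\tau\ell$ exponentially cheap) identifies the right‑hand side with $J_{\mu,\alpha}(u_\alpha)=E(\mu,\alpha)$ up to the metric and gauge errors above; tracking them over the whole comparison region produces $\mathcal E_0-E(\mu,\alpha)\le C\kappa^{4-5\delta}$, and it is exactly the condition $\delta>\frac45$ that forces this leading error to vanish. Equivalently, one may sum the lower bounds already obtained over all corners, add the exponentially small energy of $\psi$ on $\Omega\setminus\bigcup_k\mathcal N(\mathsf s_k,\ell)$ and the nonnegative magnetic term, and compare with the global asymptotics $\E=\sum_kE(\mu,\alpha_k)+o(1)$ of \eqref{eq:BonF}; the difficulty, absent from \cite{BF,CG}, is to make this comparison quantitative at the rate $\kappa^{4-5\delta}$ while keeping the coordinate‑change and self‑consistent‑field errors below it.

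Finally, the two $L^4$‑estimates follow from the energy estimates together with a local virial identity. Testing the first equation of \eqref{eq:GL} against $\bar\psi\,\chi^2$ and integrating by parts, the Neumann condition and the $\nabla\chi$‑terms being negligible by Proposition~\ref{prop:BonF}, gives, with an exponentially small remainder $\mathcal R$,
\[
\int_{\mathcal N(\mathsf s,\ell)}\Big(|(\nabla-i\kappa H\Ab)\psi|^2-\kappa^2|\psi|^2+\kappa^2|\psi|^4\Big)\,dx=\mathcal R,
\]
so that $\kappa^2\int_{\mathcal N(\mathsf s,\ell)}|\psi|^4\,dx=-2\,\mathcal E_0(\psi,\Ab;\mathcal N(\mathsf s,\ell))+\mathcal R$. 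Since the reference minimizer satisfies $E(\mu,\alpha)=-\frac\mu2\int_{\Omega_\alpha}|u_\alpha|^4$, inserting the two‑sided energy bound yields $-C\kappa^{4-5\delta}\le\kappa^2\int_{\mathcal N(\mathsf s,\ell)}|\psi|^4\,dx+2E(\mu,\alpha)\le C\kappa^{2-3\delta}$, the roles of the two rates being exchanged, as in the statement, because of the factor $-2$.
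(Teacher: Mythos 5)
Your proposal follows essentially the same route as the paper: the lower bound via an IMS cut-off, transport to the model sector and the variational characterization of $E(\mu,\alpha)$ (the bound the paper imports from \cite[Eq.~(6.11)]{BF}); the upper bound via global minimality against the competitor equal to $\psi$ outside $\mathcal N(\mathsf s,\ell)$ and to the rescaled, gauged, truncated $u_\alpha$ inside (the paper's trial state $w$, estimated as in \cite[Eq.~(6.4)]{BF}); and the $L^4$ bounds via testing the first Ginzburg--Landau equation against $f^2\overline\psi$ to get $\kappa^2\int|\psi|^4=-2\,\mathcal E_0+\mathcal O(\kappa^{-4})$. The argument and the error rates $\kappa^{2-3\delta}$, $\kappa^{4-5\delta}$ match the paper's proof.
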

\begin{proof}~

{\bf Step~1: A useful identity.}

We will prove Proposition~\ref{prop:Ka-SIMA} by establishing matching lower and upper bounds independently.  In the sequel,
$$\gamma=\kappa^{-1/2}\,, \, \ell=\kappa^{-\delta}\,, \, \hat{\ell}=(1-\gamma)\ell\,.$$ 
 The parameter $\kappa$ is sufficiently large so that $\gamma \in (0,1)$ and  for two distinct vertices $\mathsf s\not=\mathsf s'$ of $\Omega$, $$\mathcal N(\mathsf s,\hat \ell)\cap\mathcal N(\mathsf s',\hat \ell)=\emptyset\,.$$
Pick a smooth function  $f= f_{\mathsf s,\kappa}$  satisfying, for some constant $C>0$  independent of $\kappa$,
\begin{equation}\label{eq:f}
\begin{aligned}
&f=1\ {\rm in}\ \mathcal N(\mathsf s,\hat \ell)\,,\quad f=0 \ {\rm in}\  \mathcal N\big(\mathsf s,\ell\,\big)^\complement\,,\\
& 0\leq f \leq 1,\quad |\nabla f| \leq C\gamma^{-1} \ell^{-1}\ {\rm and}\  |D^2 f| \leq C\, \gamma^{-2} \ell^{-2}\ {\rm in}\ \Omega\,.
\end{aligned}
\end{equation}
Then we can write  the following simple identity using integration by parts  (see~\cite[p.~2871]{KN})
	\begin{equation}\label{eq:Delta}
\int_{\mathcal N(\mathsf s,{\ell})}\big|(\nabla-i \kappa H {\bf A})f\psi\big|^2\,dx =\int_{\mathcal N(\mathsf s,{\ell})}\big|f(\nabla-i \kappa H {\bf A})\psi\big|^2\,dx-\int_{\mathcal N(\mathsf s,{\ell})}\,f\Delta f\, |\psi|^2\,dx\,.
\end{equation}

%%%%%%%%%%%%%%%

{\bf Step~2: Energy lower bound.}

According to  Proposition~\ref{prop:BonF}, the $L^2$-norm of $\psi$ is exponentially small as $\kappa \rightarrow +\infty$ in the support of $ \Delta  f$. Thus
\begin{align*}
\int_{\mathcal N(\mathsf s,{\ell})}\big|(\nabla-i \kappa H {\bf A})f\psi\big|^2\,dx& \leq \int_{\mathcal N(\mathsf s,{\ell})}\big|f(\nabla-i \kappa H {\bf A})\psi\big|^2\,dx+\mathcal O(\kappa^{-4})\\
&\leq\int_{\mathcal N(\mathsf s,{\ell})}\big|(\nabla-i \kappa H {\bf A})\psi\big|^2\,dx+\mathcal O(\kappa^{-4})\,.
\end{align*}
Also,  since ${\rm supp}\,(f-1)\subset \mathcal N(\mathsf s,\hat\ell)^\complement$ and $\psi$ is exponentially small there, we obtain
\begin{equation}\label{eq:1-f2}
\begin{aligned}
\int_{\mathcal N(\mathsf s, {\ell})}f^2|\psi|^2\,dx
 &=\int_{\mathcal N(\mathsf s,{\ell})}|\psi|^2\,dx -\int_{\mathcal N(\mathsf s,{\ell})}(1-f^2)|\psi|^2\,dx\\
 &=\int_{\mathcal N(\mathsf s,{\ell})}|\psi|^2\,dx +\mathcal O(\kappa^{-4})\,.
\end{aligned}
\end{equation}
Now,  we come back to the  energy in $\mathcal N(\mathsf s,\ell)$ and write
$$\mathcal E_0\Big(\psi,\Ab; \mathcal N(\mathsf s,\ell)\Big)\geq  \mathcal E_0\Big(f\psi,\Ab; \mathcal N(\mathsf s,\ell)\Big)+\mathcal O(\kappa^{-2})\,.$$
In \cite[Eq.~(6.11)]{BF}, the authors prove the following  lower bound for the energy:
$$\mathcal E_0\Big(f\psi,\Ab; \mathcal N(\mathsf s,\ell)\Big)\geq E(\mu,\alpha)+\mathcal O(\kappa^{2-3\delta})\,.$$
Consequently, we obtain the lower bound part in Proposition~\ref{prop:Ka-SIMA}:
\begin{equation}\label{eq:lb-en-corner}
\mathcal E_0\Big(\psi,\Ab; \mathcal N(\mathsf s,\ell)\Big)\geq  E(\mu,\alpha)+\mathcal O(\kappa^{2-3\delta})\,.
\end{equation}

{\bf Step~3: Energy upper bound.}

We define the function $w\in H^1(\Omega)$ as follows (see \cite{SS02})
$$w(x)=f(x) e^{-i\kappa H\eta(\Phi_{\mathsf s}(x))}\,u_\alpha\big(\sqrt{\kappa H}\,\Phi_{\mathsf s}(x)\big)+\mathbf 1_{\mathcal N(\mathsf s,\ell)^\complement}(x)\psi(x)\,,$$
where $f$ is the function introduced in \eqref{eq:f}, $\eta$ is the function introduced  in \eqref{eq:F-eta} and $\Phi_{\mathsf s}$ is the diffeomorphism that transforms $\mathcal N(\mathsf s,\ell)$ to $\Omega_\alpha\cap\{|y|<\ell\}$. Note that $w$ is in $H^1(\Omega)$ because $f=0$ on
$\mathcal N(\mathsf s,\ell)^\complement$. Finally, the function $u_\alpha$ is the minimizer of the reference functional  in \eqref{eq:ref-en}. 

Since $(\psi,\Ab)$ is a minimizer of the functional in \eqref{eq-3D-GLf}, 
$$ \mathcal E_{\rm GL}(\psi,\Ab)\leq \mathcal E_{\rm GL}(w,\Ab)\,. $$
And after dropping the magnetic energy term $\kappa^2H^2\|\curl\Ab-1\|_2^2$, we obtain
\begin{align*}
\mathcal E_0(\psi,\Ab)&\leq \mathcal E_0(w,\Ab)\\
&=\mathcal E_0(w,\Ab;\mathcal N(\mathsf s,\ell))+\mathcal E_0(\psi,\Ab;\Omega\setminus\overline{\mathcal N(\mathsf s,\ell)}\,)\,. 
\end{align*}
The obvious decomposition $$\mathcal E_0(\psi,\Ab)=\mathcal E_0(\psi,\Ab;\mathcal N(\mathsf s,\ell))+\mathcal E_0(\psi,\Ab;\Omega\setminus\overline{\mathcal N(\mathsf s,\ell)}\,)$$
 in turn yields
$$\mathcal E_0 (\psi,\Ab;\mathcal N(\mathsf s,\ell))\leq \mathcal E_0(w,\Ab;\mathcal N(\mathsf s,\ell))\,.$$
The energy $\mathcal E(w,\Ab;\mathcal N(\mathsf s,\ell))$ was estimated in \cite[Eq.~(6.4)]{BF}. Indeed, the identity in \eqref{eq:Delta}, the decay of the minimizer $u_\alpha$ in \eqref{eq:ref-minimizer} and the formula in \eqref{eq:F-eta}  yield,
$$\mathcal E_0(w,\Ab;\mathcal N(\mathsf s,\ell))\leq (1+\kappa^{-\delta})E(\mu,\alpha)+\mathcal O(\kappa^{4-5\delta})\,.$$

{\bf Step~4: Estimating the order parameter.}

Multiplying the first equation in \eqref{eq:GL} by $f^2\overline{\psi}$\,, and then integrating by parts over $\mathcal N(\mathsf s,\ell)$ we obtain the following identity,
\begin{equation}\label{eq:f'}
\mathcal E_0(f\psi,\Ab;\mathcal N(\mathsf s,\ell))=\kappa^2\int_{\mathcal N(\mathsf s,\ell)}f^2\left(-1+\frac12f^2\right)|\psi|^4\,dx+\int_{\mathcal N(\mathsf s,\ell)}|\nabla f|^2|\psi|^2\,dx\,.\end{equation}
Using the exponential decay of $\psi$ in $\mathcal N(\mathsf s,\ell)\setminus \mathcal N(\mathsf s,\hat\ell)$ (see Proposition~\ref{prop:BonF}) and the properties of the function $f$ in \eqref{eq:f}, we get
$$\mathcal E_0(f\psi,\Ab;\mathcal N(\mathsf s,\ell))=\mathcal E_0(\psi,\Ab;\mathcal N(\mathsf s,\ell))+\mathcal O(\kappa^{-4})\,,$$
and
$$\int_{\mathcal N(\mathsf s,\ell)}\left(-1+\frac12f^2\right)|\psi|^4\,dx+\int_{\mathcal N(\mathsf s,\ell)}|\nabla f|^2|\psi|^2\,dx=-\frac12\int_{\mathcal N(\mathsf s,\ell)}|\psi|^4\,dx+\mathcal O(\kappa^{-4})\,.$$
Inserting these two estimates into \eqref{eq:f'} and  then using the energy estimate for $\mathcal E_0(f\psi,\Ab;\mathcal N(\mathsf s,\ell))$, we deduce the estimate on the integral of $|\psi|^4$ over $\mathcal N(\mathsf s,\ell)$.
\end{proof}

%{\clr D\'etails \`a donner ou r\'ef\'erence pr\'ecise dans \cite{Att}.}

\section{Proof of Theorem~\ref{thm:HK}}\label{sec:proof}

We use the idea introduced in our earlier paper \cite{HK} which  has some similarities with the analysis of diamagnetism \cite{FH-aif} and the computation of the quantum supercurrent  \cite{F-sc}.

Let $\delta\in(\frac45,1)$ and $\mu\in(0,\Theta_0)$ be fixed. For $-1<\epsilon<1$ and $(\psi,\Ab)_{\kappa,H=\mu^{-1}\kappa}$ a minimizer of the GL functional \eqref{eq-3D-GLf}, we define
$$\mathcal E_\epsilon(\psi,\Ab;\mathcal N(\mathsf s,\ell))=
\int_{\mathcal N(\mathsf s,\ell)} \left(|(\nabla-i\kappa H\Ab)\psi|^2-(1+\epsilon)|\psi|^2+(1+\epsilon)\kappa^2|\psi|^4\right)\,dx\,,$$
where $\ell:=\kappa^{-\delta}$ and $\mathsf s$ is a vertex of the domain $\Omega$ (with angle $\alpha$).

Let $f$ be the function satisfying \eqref{eq:f}. Using the exponential decay of $\psi$ in Proposition~\ref{eq:BonF}, 
$$ \mathcal E_\epsilon(\psi,\Ab;\mathcal N(\mathsf s,\ell))=\mathcal E_\epsilon(f\psi,\Ab;\mathcal N(\mathsf s,\ell))+\mathcal O(\kappa^{-4})$$
uniformly with respect to $\epsilon\in(-1,1)$. 

Now assume in addition that $\mu+\epsilon<\Theta_0$ (which holds when $\epsilon$ is small enough for example). We can bound the energy $\mathcal E_\epsilon(f\psi,\Ab;\mathcal N(\mathsf s,\ell))$ from  below as done in \cite[Eq.~(6.11)]{BF} (by converting the functional to a functional defined in an infinite sector  and then comparing the resulting functional with the reference energy in \eqref{eq:ref-en}). We obtain
$$\mathcal E_\epsilon(f\psi,\Ab;\mathcal N(\mathsf s,\ell))\geq E(\mu+\epsilon,\alpha)+\mathcal O(\kappa^{2-3\delta})$$
uniformly with respect to $\epsilon\in(-1,1)$.
Thus,
\begin{align*}
-\epsilon\kappa^2\int_{\mathcal N(\mathsf s,\ell)}|\psi|^2\,dx&=\mathcal E_\epsilon(\psi,\Ab;\mathcal N(\mathsf s,\ell))-\mathcal E_0(\psi,\Ab;\mathcal N(\mathsf s,\ell))-\epsilon\,\frac{\kappa^2}2\int_{\mathcal N(\mathsf s,\ell)}|\psi|^4\,dx\\
&\geq E(\mu+\epsilon,\alpha)-\mathcal E_0(\psi,\Ab;\mathcal N(\mathsf s,\ell))-\epsilon\,\frac{\kappa^2}2\int_{\mathcal N(\mathsf s,\ell)}|\psi|^4\,dx+\mathcal O(\kappa^{2-3\delta})\,,
\end{align*}
uniformly with respect to $\epsilon\in(-1,1)$. Using Proposition~\ref{prop:Ka-SIMA}, we get further
$$
-\epsilon\kappa^2\int_{\mathcal N(\mathsf s,\ell)}|\psi|^2\,dx\geq E(\mu+\epsilon,\alpha)-E(\mu,\alpha)-\epsilon\left(-E(\mu,\alpha)+\mathcal O(\kappa^{2-3\delta})\right)+\mathcal O(\kappa^{2-3\delta})\,.
$$
Divide by $\epsilon\not=0$  and then send $\kappa$ to $+ \infty$. For $\epsilon>0$, we obtain (observing that $2-3 \delta <0$)
$$\liminf_{\kappa\to+\infty}\left(-\kappa^2\int_{\mathcal N(\mathsf s,\ell)}|\psi|^2\,dx\right)\geq  \frac{E(\mu+\epsilon,\alpha)-E(\mu,\alpha)}\epsilon+E(\mu,\alpha)\,,$$
while for $\epsilon<0$,
$$\limsup_{\kappa\to+\infty}\left(-  \kappa^2 \int_{\mathcal N(\mathsf s,\ell)}|\psi|^2\,dx\right)\leq \frac{E(\mu+\epsilon,\alpha)-E(\mu,\alpha)}{\epsilon}+E(\mu,\alpha)\,.$$
Taking $\epsilon\to0_\pm$, we get the following two inequalities:
$$\liminf_{\kappa\to+\infty}\left(-  \kappa^2 \int_{\mathcal N(\mathsf s,\ell)}|\psi|^2\,dx\right)\geq E'(\mu_+,\alpha)+E(\mu,\alpha)\,,$$
and
$$\limsup_{\kappa\to+\infty}\left(- \kappa^2\int_{\mathcal N(\mathsf s,\ell)}|\psi|^2\,dx\right)\leq E'(\mu_-,\alpha)+E(\mu,\alpha)\,.$$ 
 This finishes the proof of  the asymptotic formula for $\int_{\mathcal N(\mathsf s,\ell)}|\psi|^2\,dx$ in Theorem~\ref{thm:HK}. The formula for $\int_{\mathcal N(\mathsf s,\ell)}|\psi|^4\,dx$ is already proved in Proposition~\ref{prop:Ka-SIMA}. The formula for $\int_{\mathcal N(\mathsf s,\ell)}|(\nabla-i\kappa H)\Ab)\psi|^2\,dx$ now results from the formula for 
$\mathcal E_0(\psi,\Ab;\mathcal N(\mathsf s,\ell))$ in Proposition~\ref{prop:Ka-SIMA} and the  formulas for  $\int_{\mathcal N(\mathsf s,\ell)}|\psi|^2\,dx$ and $\int_{\mathcal N(\mathsf s,\ell)}|\psi|^4\,dx$.
%{\clr Bernard: d\'emonstration du th\'eor\`eme \`a compl\'eter.\\}

%\section{More on the reference functional}

%To bridge the results with those in \cite{CG}, it is a good idea to find an asymptotic estimate of the quantity
%$$E(\mu,\alpha)$$
%as $\mu\nearrow\Theta_0$.

%{\clg  Ayman: Je pense qu'on aura $E(\mu,\alpha)\to E(\Theta_0,\alpha)$.}
%\subsection*{Acknowledgments}
%AK is supported by a grant from Lebanese university in the framework of `\'Equipe de Mod\'elisation, Analyse et Applications'. 

%{\clr A d\'etailler ou faire un renvoi pr\'ecis \`a un autre article}

\end{document}